\author{
\small Centre de Math\'ematiques Laurent Schwartz\\
\small Ecole Polytechnique\\
\small FR-91128 Palaiseau C\'edex\\
\footnotesize johannes@math.polytechnique\\
\footnotesize  and UMR 7640, CNRS
} \date{}
\title{Almost sure Weyl asymptotics for non-self-adjoint elliptic
  operators on compact manifolds}
\author{William Bordeaux Montrieux\footnote{Fakult\"at f\"ur Mathematik, Universit\"at Wien, Nordbergstrasse 15, 1090 Wien,
Austria,
william.bordeaux-montrieux@univie.ac.at}
\and
Johannes Sj\"ostrand\footnote{IMB, Universit\'e de Bourgogne, 9,
Av. A. Savary, BP 47870, FR 21078 Dijon c\'edex, France, 
and UMR 5584 CNRS,
johannes.sjostrand@u-bourgogne.fr}\ \footnote{Ce travail a b\'en\'efici\'e d'une aide de l'Agence Nationale de la Recherche
portant la r\'ef\'erence ANR-08-BLAN-0228-01}}
\newtheorem{dref}{Definition}[section] 
\newtheorem{theo}[dref]{Theorem} \newtheorem{prop}[dref]{Proposition}
\newenvironment{proof}{\par\noindent{{\bf Proof.}}}{\hfill$\Box$
\medskip} 
\newenvironment{proofof}{\par\noindent{{\bf Proof of}}}{\hfill$\Box$
\medskip}
\newcommand{\ekv}[2]{\begin{equation}\label{#1}#2\end{equation}}
\newcommand{\eekv}[3]{\begin{eqnarray}\label{#1}#2 \\ #3
\nonumber\end{eqnarray}}
\begin{document}

\maketitle
\begin{abstract} In this paper, we consider elliptic differential
  operators on compact manifolds with a random perturbation in the 0th
  order term and show under fairly weak additional assumptions that
  the large eigenvalues almost surely distribute according to the
  Weyl law, well-known in the self-adjoint case.

\medskip \par \centerline{\bf R\'esum\'e} Dans ce travail, nos consid\'erons des op\'erateurs 
diff\'erentiels
elliptiques sur des vari\'et\'es compactes avec une perturbation
al\'eatoire dans le terme d'orde 0.
Sous des hypoth\`eses suppl\'ementaires assez faibles, nous montrons
que les grandes valeurs propres se distribuent selon la loi de Weyl,
bien connue dans le cas auto-adjoint.

\end{abstract}

\tableofcontents

\section{Introduction}\label{int}
\setcounter{equation}{0}

This work is a continuation of a series of works concerning the
asymptotic distribution of eigenvalues for non-self-adjoint
(pseudo-)differential operators with random perturbations. Since the
works of L.N.~Trefethen \cite{Tr}, E.B.~Davies \cite{Da}, M.~Zworski \cite{Zw} and many others (see
for instance \cite{Ha} for further references) we
know that the resolvents of such operators tend to have very large
norms when the spectral parameter is in the range of the symbol, and
consequently, the eigenvalues are unstable under small perturbations
of the operator. It is therefore quite natural to study the effect of 
random perturbations. Mildred Hager
\cite{Ha} studied quite general classes of non-self-adjoint
$h$-pseudodifferential operators on the real line with a suitable random
potential added, and she showed that the eigenvalues distribute
according to the natural Weyl law with a probability very close to 1
in the semi-classical limit ($h\to 0$). Due to the method, this 
result was restricted to the interior of the range 
of the leading symbol $p$ of the
operator and with a non-vanishing assumption on the Poisson bracket $\{
p,\overline{p}\}$. 

In \cite{HaSj} the results were generalized to higher dimension and 
the boundary of the range of $p$ could be included, but the
perturbations where no more multiplicative. In \cite{Sj1, Sj2} further
improvements of the method were introduced and the case of
multiplicative perturbations was handled in all dimensions. 

W.~Bordeaux Montrieux \cite{Bo} studied
elliptic systems of differential operators on $S^1$
with random perturbations of the coefficients,
and under some additional assumptions, he showed that the large eigenvalues obey the
Weyl law \emph{almost surely}. His analysis was based on a reduction to
the semi-classical case (using essentially the Borel-Cantelli lemma), 
where he could use and extend the methods of Hager \cite{Ha}. 

The purpose of the present work is to extend the results of \cite{Bo}
to the case of elliptic operators on compact manifolds by replacing
the one dimensional semi-classical techniques by the more recent
result of \cite{Sj2}. For simplicity, we treat only the scalar case
and the random perturbation is a potential.

Let $X$ be a smooth compact manifold of dimension $n$. Let $P^0$ be an
elliptic differential operator on $X$ of order $m\ge 2$ with smooth
coefficients and with
principal symbol $p(x,\xi )$. In local coordinates we get, using
standard multi-index notation,
\ekv{in.1}
{
P^0=\sum_{|\alpha |\le m}a_\alpha ^0(x)D^\alpha ,\quad 
p(x,\xi )=\sum_{|\alpha |= m}a_\alpha ^0(x)\xi ^\alpha.
}
Recall that the ellipticity of $P^0$ means that $p(x,\xi )\ne 0$ for
$\xi \ne 0$. We assume that
\ekv{in.2}
{
p(T^*X)\ne {\bf C}.
}
Fix a strictly positive smooth density of integration $dx$ on $X$, so
that the $L^2$ norm $\Vert \cdot \Vert$ and inner product $(\cdot
|\cdot \cdot )$ are unambiguously defined. Let $\Gamma :L^2(X)\to
L^2(X)$ be the antilinear operator of complex conjugation, given by
$\Gamma u=\overline{u}$. We need the symmetry assumption
\ekv{in.3}
{
P^*=\Gamma P\Gamma ,
}
where $P^*$ is the formal complex adjoint of $P$. As in \cite{Sj2} we
observe that the property (\ref{in.3}) implies that
\ekv{in.4}
{
p(x,-\xi )=p(x,\xi ),
}
and conversely, if (\ref{in.4}) holds, then the operator
$\frac{1}{2}(P+\Gamma P\Gamma )$ has the same principal symbol $p$ and
satisfies (\ref{in.3}).

\par Let $\widetilde{R}$ be an elliptic differential operator on $X$
with smooth coefficients, which is self-adjoint and strictly
positive. Let $\epsilon _0,\epsilon _1,...$ be an orthonormal basis of
eigenfunctions of $\widetilde{R}$ so that 
\ekv{in.5}
{
\widetilde{R}\epsilon _j=(\mu _j^0)^2\epsilon _j,\quad 0<\mu _0^0<\mu
_1^0\le \mu _2^0\le ...
}
Our randomly perturbed operator is 
\ekv{in.6}
{
P_\omega ^0=P+q_\omega ^0(x),
}
where $\omega $ is the random parameter and 
\ekv{in.7}
{
q_\omega ^0(x)=\sum_{0}^\infty \alpha _j^0(\omega )\epsilon _j.
}
Here we assume that $\alpha _j^0(\omega )$ are independent complex
Gaussian random variables of variance $\sigma _j^2$ and mean value 0:
\ekv{in.8}
{
\alpha _j^0\sim {\cal N}(0,\sigma _j^2),
}
where 
\ekv{in.8.5}
{
(\mu _j^0)^{-\rho }e^{-(\mu _j^0)^{\frac{\beta }{M+1}}}\lesssim 
\sigma _j\lesssim (\mu _j^0)^{-\rho },
}
\ekv{in.9}
{M=\frac{3n-\frac{1}{2}}{s-\frac{n}{2}-\epsilon },\ 0\le \beta
  <\frac{1}{2},\ 
\rho >n,
}
where $s$, $\rho $, $\epsilon $ are fixed constants such that
$$
\frac{n}{2}<s<\rho -\frac{n}{2},\ 0<\epsilon <s-\frac{n}{2}.
$$

\par Let $H^s(X)$ be the standard Sobolev space of order $s$. As will
follow from considerations below, we have $q_\omega^0 \in H^s(X)$ almost
surely since $s<\rho -\frac{n}{2}$. Hence $q_\omega^0\in L^\infty $
almost surely, implying that $P_\omega ^0$ has purely discrete
spectrum.

\par Consider the function $F(\omega )=\mathrm{arg\,}p(\omega )$ on
$S^*X$. For given $\theta _0\in S^1\simeq {\bf R}/(2\pi {\bf Z})$,
$N_0\in \dot{{\bf N}}:={\bf N}\setminus \{ 0\}$, we introduce the property $P(\theta _0,N_0)$:
\ekv{in.10}
{
\sum_1^{N_0}|\nabla ^kF(\omega )|\ne 0\hbox{ on }\{ \omega \in S^*X;\,
F(\omega )=\theta _0\}.
}
Notice that if $P(\theta _0,N_0)$ holds, then $P(\theta ,N_0)$ holds
for all $\theta $ in some neighborhood of $\theta _0$.

\par We can now state our main result.

\begin{theo}\label{in1}
Assume that $m\ge 2$. Let $0\le \theta _1\le \theta _2\le 2\pi $ and
assume that $P(\theta _1,N_0)$ and $P(\theta _2,N_0)$ hold for some
$N_0\in\dot{{\bf N}}$. Let $g\in C^\infty ([\theta _1,\theta
_2];]0,\infty [)$ and put 
$$
\Gamma ^g_{\theta _1,\theta _2;0,\lambda }=\{ re^{i\theta } ; \theta
  _1\le \theta \le \theta _2,\ 0\le r\le \lambda g(\theta )\}.
$$
Then for every $\delta \in ]0,\frac{1}{2}-\beta [$ there exists $C>0$ such
that almost surely: $\exists C(\omega )<\infty $ such that for all
$\lambda \in [1,\infty [$:
\eekv{in.11}
{
|\#(\sigma (P_\omega ^0)\cap \Gamma _{\theta _1,\theta _2;0,\lambda }^g)
-\frac{1}{(2\pi )^n}\mathrm{vol\,}p^{-1}(\Gamma ^g_{\theta _1,\theta
  _2;0,\lambda })
|
}
{\le C(\omega )+C\lambda ^{\frac{n}{m}-\frac{1}{m}(\frac{1}{2}-\beta -\delta
    )\frac{1}{N_0+1}}.}
Here $\sigma (P_\omega ^0)$ denotes the spectrum and $\# (A)$ denotes
the number of elements in the set $A$. In (\ref{in.11}) the
eigenvalues are counted with their algebraic multiplicity.
\end{theo}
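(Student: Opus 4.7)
The plan is to reduce the large-$\lambda$ counting problem for $P_\omega^0$ to a family of semiclassical counting problems on dyadic annuli of fixed shape, apply the probabilistic Weyl law of \cite{Sj2} on each of them, and pass from the \emph{with probability close to $1$} conclusion of \cite{Sj2} to an almost sure statement by Borel--Cantelli. This mirrors the strategy of \cite{Bo}, but replaces the one-dimensional semiclassical analysis used there by the multi-dimensional machinery of \cite{Sj2}.

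First, I perform a dyadic semiclassical rescaling. For each integer $k\ge 0$ set $h = h_k := 2^{-k/m}$ and consider $P_h := h^m P_\omega^0$. This is a semiclassical elliptic operator of order $m$ with principal symbol $p(x,\xi)$ (after rescaling $\xi\mapsto h\xi$). Eigenvalues of $P_\omega^0$ in the annular sector $\Gamma^g_{\theta_1,\theta_2;2^k,2^{k+1}}$ correspond bijectively to eigenvalues of $P_h$ in the fixed sector $\widetilde\Gamma^g_{\theta_1,\theta_2;1,2}$, independent of $k$. The random perturbation becomes $h^m q_\omega^0$; the upper bound $\sigma_j\lesssim (\mu_j^0)^{-\rho}$ in \nr{in.8.5} yields $q_\omega^0\in H^s$ almost surely (since $s<\rho-n/2$), and together with the lower bound it places this rescaled potential in the admissible class used in \cite{Sj2}.

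Second, I apply \cite{Sj2} to $P_h$ on the fixed sector $\widetilde\Gamma^g_{\theta_1,\theta_2;1,2}$. Since $\arg p$ is homogeneous of degree $0$ in $\xi$, the property $P(\theta_i,N_0)$ transfers directly to the rescaled operator and supplies the boundary nondegeneracy hypothesis required in \cite{Sj2} to handle the corners where the sector meets the range of $p$. The output, for each $k$, is an estimate
\[
\Bigl|\#(\sigma(P_\omega^0)\cap \Gamma^g_{\theta_1,\theta_2;2^k,2^{k+1}}) - \frac{1}{(2\pi)^n}\mathrm{vol\,}p^{-1}(\Gamma^g_{\theta_1,\theta_2;2^k,2^{k+1}})\Bigr| \le C\,2^{\frac{k}{m}(n-\delta_0)},
\]
with $\delta_0 := (\frac{1}{2}-\beta-\delta)/(N_0+1)$, valid outside an exceptional event $E_k$ whose probability is controlled by the Grushin-problem reduction and the Jensen-type determinant estimates developed in \cite{Ha,HaSj,Sj1,Sj2}.

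Third, Borel--Cantelli: the lower bound in \nr{in.8.5} is calibrated so that $\sum_k \mathbf{P}(E_k) < \infty$. Hence almost surely only finitely many $E_k$ occur; absorbing these bad windows into a random constant $C(\omega)<\infty$ and summing the annular estimates for $k\le \log_2\lambda$, together with the additivity of the Weyl volume, gives \nr{in.11}. The main obstacle is the second step: the very weak noise tolerated at high frequencies --- where $\sigma_j$ may decay almost like $e^{-(\mu_j^0)^{\beta/(M+1)}}$ --- must still enforce the resolvent lower bounds needed in the Grushin reduction on each semiclassical sector, and it is precisely the tuning of $M$ in \nr{in.9} against the Sobolev threshold $s$ that keeps the failure probabilities $\mathbf{P}(E_k)$ summable in $k$.
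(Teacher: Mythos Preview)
Your overall architecture---dyadic rescaling, apply \cite{Sj2} on each annulus, then Borel--Cantelli---is exactly the paper's, and the exponents you write down are correct. The gap is in your second step, where you assert that ``the upper bound \ldots together with the lower bound \ldots places this rescaled potential in the admissible class used in \cite{Sj2}.'' This is precisely the point that requires work, and it is the main technical content of the paper beyond \cite{Sj2} and \cite{Bo}.

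The perturbation framework of \cite{Sj2} demands a \emph{finite} random sum $\delta Q_\omega=\tau_0 h^{2N_1+n}\sum_{0<h\mu_k^0\le L}\beta_k\epsilon_k$ with $|\beta|_{{\bf C}^D}\le R$ and a joint density satisfying $|\nabla_\alpha\Phi|={\cal O}(h^{-N_4})$. The rescaled potential $h^m q_\omega^0$ is none of these things: it is an infinite sum, and even after truncating to $h\mu_j^0\le L$, the variances $h^{2m}\sigma_j^2$ do not directly produce coefficients bounded by $R$ with the required scaling unless $m$ is large (see the condition analogous to (\ref{sc.23}) in the paper). The paper resolves this by a two-stage decomposition: first split off the high-frequency tail $q_\omega^2$ and show $\|h^m q_\omega^2\|_{H^s}\le h$ with overwhelming probability; second---and this is the key device---write each low-frequency coefficient as $\alpha_j^0=\alpha_j'+\alpha_j''$ with $\alpha_j'\sim{\cal N}(0,(\sigma')^2)$ for a single small $\sigma'=\frac{1}{C}h^K e^{-Ch^{-\beta}}$, absorb $h^m(q_\omega''+q_\omega^2)$ into the unperturbed operator (which \cite{Sj2} tolerates as an $H^s$-small deterministic perturbation), and use only $h^m q_\omega'$ as the random perturbation. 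It is this Gaussian splitting that lets one choose $\tau_0$ so that the resulting $\beta_j$ satisfy $|\beta|\le R$ with summable failure probability \emph{without} any further restriction on $m\ge2$. Your last paragraph locates the difficulty at the wrong end: the issue is not that the weak high-frequency noise must still drive the Grushin estimates, but rather that one must carve out of $q_\omega^0$ a piece with exactly the finite-dimensional structure and scaling that \cite{Sj2} requires.

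A secondary omission: to get (\ref{in.11}) for \emph{all} $\lambda\ge1$ and not just powers of $2$, you need the uniform-in-$\Gamma$ version of the \cite{Sj2} estimate on the top partial annulus $\Gamma_{2^{k(\lambda)},\lambda}$, which costs an extra factor of $r^{-1}$ in the failure probability but is otherwise routine.
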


The proof actually allows to have almost surely a simultaneous
conclusion for a whole family of $\theta _1,\theta _2,g$:

\begin{theo}\label{in2}
Assume that $m\ge 2$. Let $\Theta $ be a compact subset of $[0,2\pi ]$. Let
$N_0\in {\bf N}$ and assume that $P(\theta ,N_0)$ holds uniformly for
$\theta \in \Theta $. Let ${\cal G}$ be a subset of $\{(g,\theta
_1,\theta _2);\ \theta _j\in \Theta, \theta _1\le \theta _2,\ g\in 
C^\infty ([\theta _1,\theta
_2];]0,\infty [)
\}$ with the property that $g$ and $1/g$ are uniformly bounded in $C^\infty ([\theta _1,\theta
_2];]0,\infty [)$ when $(g,\theta _1,\theta _2)$ varies in ${\cal
  G}$. Then for every $\delta \in ]0,\frac{1}{2}-\beta [$ 
there exists $C>0$ such
that almost surely: $\exists C(\omega )<\infty $ such that for all
$\lambda \in [1,\infty [$ and all $(g,\theta _1,\theta _2)\in {\cal
  G}$, we have the estimate (\ref{in.11}).
\end{theo}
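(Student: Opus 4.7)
The plan is to bootstrap from Theorem \ref{in1} --- or rather, from the probabilistic estimate underlying it --- by a covering-and-interpolation argument over ${\cal G}$. One expects Theorem \ref{in1} to follow from an estimate of the form: for each fixed $(g,\theta _1,\theta _2)$ with uniform data, and each $\lambda \ge 1$, the event on which (\ref{in.11}) fails with a given deterministic $C$ has probability bounded by some $p(\lambda )$ decaying at least super-polynomially, so that combining with Borel-Cantelli along a geometric sequence $\lambda _k=2^k$ yields the one-parameter conclusion. To upgrade this to a uniform almost sure statement, the idea is to apply the union bound over a sufficiently fine but not too large net ${\cal G}_k\subset {\cal G}$.

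Concretely, for each $k$ I would build an $\eta _k$-net ${\cal G}_k$ in ${\cal G}$, with distance measured in $C^1$ for $g$ and in absolute value for $\theta _1,\theta _2$, choosing $\eta _k=\lambda _k^{-\alpha }$, where $\alpha =\frac{1}{m}(\frac{1}{2}-\beta -\delta )\frac{1}{N_0+1}$ is the exponent appearing on the right-hand side of (\ref{in.11}). Since ${\cal G}$ is bounded in $C^\infty $, its metric entropy in $C^1$ is polynomial: $|{\cal G}_k|\le C\eta _k^{-D}$ for some $D$ depending only on $n$ and the uniform $C^\infty $-bounds. A union bound of $p(\lambda _k)$ over ${\cal G}_k$, combined with a discretization of $\lambda $ inside $[\lambda _k,\lambda _{k+1}]$, still gives a summable failure probability, and Borel-Cantelli yields (\ref{in.11}) almost surely at every net point for all large $k$.

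Interpolation from the net to all of ${\cal G}$ is then straightforward. Given $(g',\theta _1',\theta _2')\in {\cal G}$ within $\eta _k$ of a net point $(g,\theta _1,\theta _2)$, the symmetric difference of the sectors $\Gamma ^g_{\theta _1,\theta _2;0,\lambda }$ and $\Gamma ^{g'}_{\theta _1',\theta _2';0,\lambda }$ sits in a thin annular shell whose $p^{-1}$-volume is $O(\lambda ^{n/m}\eta _k)=O(\lambda ^{n/m-\alpha })$, which absorbs cleanly into the right-hand side of (\ref{in.11}). The change in the eigenvalue count is controlled by a sandwiching argument: enlarge (respectively shrink) the sector $\Gamma ^g$ by $\eta _k$ and include the enlarged and shrunken versions in the net ${\cal G}_k$; comparing counts via (\ref{in.11}) on the sandwich sectors bounds the change.

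The main obstacle is to verify that the probabilistic estimate behind Theorem \ref{in1} holds with constants truly uniform over ${\cal G}$, i.e.\ that the microlocal constructions attached to $p^{-1}(\Gamma ^g_{\theta _1,\theta _2;0,\lambda })$ (Grushin problems, pseudodifferential multipliers cutting off the relevant phase-space region, and so on) depend only on the uniform $C^\infty $-bounds on $g,1/g$ and on the uniform constant in the hypothesis $P(\theta ,N_0)$ for $\theta \in \Theta $. Once this uniformity is settled by inspection of the proof of Theorem \ref{in1}, the covering and interpolation steps are essentially combinatorial.
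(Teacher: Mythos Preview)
Your approach is viable but takes a substantially longer route than the paper. The paper's proof of Theorem \ref{in2} is four lines: it simply observes that Proposition \ref{sc2} (the semiclassical input from \cite{Sj2}) already comes equipped with a family version --- stated in the paragraph containing (\ref{sc.30}) --- which gives the estimate (\ref{sc.28}) \emph{simultaneously} for all $\Gamma$ in a uniform family, on a single event of the indicated probability. Since the proof of Theorem \ref{in1} already invoked this family version to obtain (\ref{en.7}) uniformly in $\widetilde\lambda\in[\lambda,2\lambda[$, one just lets $(g,\theta_1,\theta_2)$ vary in ${\cal G}$ as well in both (\ref{en.6}) and (\ref{en.7}), and the remainder of the Borel--Cantelli argument is unchanged. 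No net, no interpolation.

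Your net-and-interpolation scheme would also work, but it is redundant once the family version of Proposition \ref{sc2} is on the table; in effect you are reproving that family statement from the single-$\Gamma$ one. Two minor corrections if you were to carry it out: the angular contribution to the symmetric difference (from perturbing $\theta_1,\theta_2$ by $\eta_k$) has $p^{-1}$-volume of order $\lambda^{n/m}\eta_k^{1/N_0}$ by Proposition \ref{vo3}, not $\lambda^{n/m}\eta_k$, so $\eta_k$ must be chosen smaller than you indicate; and the enlarged and shrunken sandwich sectors need not lie in ${\cal G}$ itself, so you would have to work in a slight thickening of ${\cal G}$ and of $\Theta$ (harmless, since $P(\theta,N_0)$ is an open condition and $\Theta$ is compact). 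The ``main obstacle'' you correctly flag --- uniformity of the microlocal constants over ${\cal G}$ --- is exactly what the family form of Proposition \ref{sc2} packages, and the paper exploits it directly rather than layering a covering argument on top.
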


The condition (\ref{in.8.5}) allows us to choose $\sigma _j$
 decaying faster than any negative power of
$\mu _j^0$. Then from the
discussion below, it will follow that $q_\omega (x)$ is almost
surely a smooth function. A rough and somewhat intuitive
interpretation of Theorem \ref{in2} is then that for 
almost every elliptic
operator of order $\ge 2$ with smooth coefficients on a compact manifold which satisfies the 
conditions (\ref{in.2}), (\ref{in.3}), the large eigenvalues
distribute according to Weyl's law in sectors with limiting directions
that satisfy a weak non-degeneracy condition.

\section{Volume considerations}\label{vo}
\setcounter{equation}{0}

In the next section we shall perform a reduction to a semi-classical
situation and work with $h^mP_0$ which has the semi-classical
principal symbol $p$ in (\ref{in.1}). As in \cite{HaSj, Sj1, Sj2}, we introduce
\ekv{vo.1}
{
V_z(t)=\mathrm{vol\,}\{ \rho \in T^*X;\, |p(\rho )-z|^2\le t\},\ t\ge 0.
}
\begin{prop}\label{vo1}
For any compact set $K\subset \dot{{\bf C}}={\bf C}\setminus \{ 0\}$, we have
\ekv{vo.2}
{
V_z(t)={\cal O}(t^\kappa ),\hbox{ uniformly for }z\in K,\ 0\le t\ll 1,
}
with $\kappa =1/2$.
\end{prop}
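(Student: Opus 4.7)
The plan is to reduce to a compact region of $T^*X$ using ellipticity, and then exploit the fact that $dp$ cannot vanish on the level sets of interest.

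First I would observe that, since $p$ is homogeneous of degree $m$ in $\xi$ and nowhere vanishing for $\xi\ne 0$, there exist constants $c_0,C_0>0$ such that $c_0|\xi|^m\le |p(x,\xi)|\le C_0|\xi|^m$ uniformly on $T^*X$. For $z$ in a compact set $K\subset\dot{\mathbb{C}}$ we have $|z|\ge c_K>0$, so if $|p(\rho)-z|^2\le t$ with $t$ small, then $|p(\rho)|\in[c_K/2,2\sup_K|z|]$, hence $|\xi|$ lies in a fixed compact subinterval of $]0,\infty[$. Thus $\{|p-z|^2\le t\}$ is contained in a fixed compact set $\mathcal K\subset T^*X\setminus 0$, uniformly for $z\in K$ and $0\le t\ll 1$.

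Next, by Euler's homogeneity relation $\xi\cdot\partial_\xi p=mp$, the nonvanishing of $p$ on $\mathcal K$ forces $\partial_\xi p\ne 0$ there, so the complex differential $dp$ is nowhere zero on $\mathcal K$. Writing $p=p_1+ip_2$, this means $|dp_1|+|dp_2|\ge c>0$ on $\mathcal K$ for some $c>0$. By compactness we cover $\mathcal K$ by finitely many coordinate patches $U_1,\dots,U_N$, each equipped with local coordinates on $T^*X$, such that on each $U_j$ either $|dp_1|\ge c/2$ or $|dp_2|\ge c/2$.

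In a patch $U_j$ of the first type, $p_1$ can be completed to a smooth coordinate system, and the slab $\{|p_1-\mathrm{Re}\,z|\le \sqrt{t}\}\cap U_j$ then has volume $O(\sqrt{t})$ with a constant independent of $z\in K$; similarly, patches of the second type contribute $O(\sqrt{t})$ via $p_2$. Since
$$
\{|p-z|^2\le t\}\subset \{|p_1-\mathrm{Re}\,z|\le\sqrt{t}\}\cap\{|p_2-\mathrm{Im}\,z|\le\sqrt{t}\},
$$
each point of this set lies in some $U_j$ and is contained in the corresponding slab. Summing over the finitely many patches gives $V_z(t)\le C\sqrt{t}=O(t^{1/2})$, uniformly for $z\in K$.

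The only mild subtlety is the need for uniformity in $z$, but the containment of $\{|p-z|^2\le t\}$ in a $z$-independent compact set $\mathcal K$, together with the fact that the finite covering and the lower bound $|dp_1|+|dp_2|\ge c$ are intrinsic to $\mathcal K$, makes this automatic. No quantitative use of the non-degeneracy of $\{p,\overline p\}$ is needed for the exponent $\kappa=1/2$; such refinements would only be relevant if one wanted $\kappa>1/2$, which is not claimed here.
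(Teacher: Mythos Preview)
Your argument is correct. The confinement to a fixed compact $\mathcal K\subset T^*X\setminus 0$, the use of Euler's relation to get $dp\neq 0$ there, and the slab estimate via completing $p_1$ or $p_2$ to local coordinates are all sound, and the uniformity in $z\in K$ is handled properly.

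The paper arrives at the same conclusion by a slightly different route. Instead of splitting into $p_1,p_2$ and patching, it proves a stronger auxiliary statement (Proposition~\ref{vo2}): for any closed $C^1$ curve $\gamma=\{g(\theta)e^{i\theta}\}$ with $g>0$, one has $\mathrm{vol}\,p^{-1}(\gamma+D(0,t))=\mathcal O(t)$. The proof uses polar coordinates $\rho=r\omega$, $\omega\in S^*X$, and the homogeneity $p(\rho)=r^m p(\omega)$; the radial derivative is then manifestly nonzero (the same Euler observation you make), so for each $\omega$ the admissible $r$'s form an interval of length $\mathcal O(t)$, and one integrates over $S^*X$ without any local patching. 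Proposition~\ref{vo1} follows by taking $\gamma$ to be the circle of radius $|z|$ and noting $D(z,\sqrt t)\subset\gamma+D(0,\sqrt t)$. The advantage of the paper's organization is that Proposition~\ref{vo2} is reused later (together with Proposition~\ref{vo3}) to estimate $\mathrm{vol}\,p^{-1}(\partial\Gamma+D(0,r))$ for the domains $\Gamma^g_{\theta_1,\theta_2;r_1,r_2}$; your direct argument, while perfectly adequate for Proposition~\ref{vo1} itself, does not immediately yield that tube estimate.
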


The property (\ref{vo.2}) for some $\kappa \in ]0,1[$ is required in
\cite{HaSj, Sj1, Sj2} near the boundary of the set $\Gamma $, where we count
the eigenvalues. Another important quantity appearing there was 
\ekv{vo.3}
{
\mathrm{vol\,}(\gamma +D(0,t)),
} 
where $\gamma =\partial \Gamma $ and   $\Gamma \Subset \dot{{\bf C}}$ is assumed to have piecewise 
smooth boundary. From (\ref{vo.2}) with  general $\kappa $ it follows
that the volume (\ref{vo.3}) is ${\cal O}(t^{2\kappa -1})$, which is
of interest when $\kappa >1/2$. In our case, we shall therefore
investigate $\mathrm{vol\,}(\gamma +B(0,t))$ more directly, when
$\gamma $ is (the image of) a smooth curve. The following result
implies Proposition \ref{vo1}:
\begin{prop}\label{vo2}
Let $\gamma $ be the curve $\{ re^{i\theta }\in {\bf C};\, r=g(\theta
),\ \theta \in S^1 \}$, where $0<g\in C^1(S^1)$. Then 
$$
\mathrm{vol\,}(p^{-1}(\gamma +D(0,t)))={\cal O}(t),\ t\to 0.
$$
\end{prop}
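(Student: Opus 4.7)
The plan is to work in local charts on $X$, switch to polar coordinates in the cotangent fibers, and reduce the statement to an elementary bound on how far a point lying within distance $t$ of $\gamma$ can be displaced along a ray through the origin. Since $X$ is compact, a finite atlas with a subordinate partition of unity reduces the problem to finitely many local pieces of $T^*X$.

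In a chart I would write $\xi=r\omega$ with $r>0$, $\omega\in S^{n-1}$, so that $d\xi=r^{n-1}\,dr\,d\omega$. By homogeneity of degree $m$, $p(x,r\omega)=r^mR(x,\omega)e^{iF(x,\omega)}$, where $R(x,\omega):=|p(x,\omega)|$ and $F(x,\omega):=\mathrm{arg\,}p(x,\omega)$ are smooth functions on $\overline{U}\times S^{n-1}$, with $R$ bounded above and below by positive constants thanks to ellipticity and compactness. Because $\gamma+D(0,t)$ lies in a fixed annulus $\{r_1\le|z|\le r_2\}\Subset\dot{\bf C}$ for small $t$, the condition $p(x,r\omega)\in\gamma+D(0,t)$ confines $r$ to a bounded interval $I\Subset{]0,\infty[}$ independent of $t$.

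The heart of the argument is the following planar lemma that I would prove first: for $g\in C^1(S^1)$, $g>0$, there exists $C>0$ such that for every $\rho$ in any fixed compact sub-interval of ${]0,\infty[}$ and every $\phi\in S^1$,
$$\mathrm{dist}(\rho e^{i\phi},\gamma)\le t\ \Longrightarrow\ |\rho-g(\phi)|\le Ct,$$
provided $t$ is small. This follows by expanding
$$|g(\theta)e^{i\theta}-\rho e^{i\phi}|^2=(g(\theta)-\rho)^2+2g(\theta)\rho(1-\cos(\theta-\phi));$$
smallness of the left-hand side forces simultaneously $|g(\theta)-\rho|\le t$ and $|\theta-\phi|=O(t)$ (the second summand is bounded below by a positive constant whenever $|\theta-\phi|$ is bounded away from $0$ on $S^1$), and then the Lipschitz bound on $g$ gives $|g(\phi)-\rho|=O(t)$.

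Applying this with $\rho=r^mR(x,\omega)$ and $\phi=F(x,\omega)$, and using that $r\mapsto r^m$ has derivative $mr^{m-1}$ bounded below on $I$ while $R$ is bounded below, I would conclude that the admissible values of $r$ lie in an interval of length $O(t)$ about $r_*(x,\omega):=(g(F(x,\omega))/R(x,\omega))^{1/m}$. Integrating $r^{n-1}\,dr\,d\omega\,dx$ over the chart then yields a contribution of order $t$; summing over the finite atlas gives the claim. The only step with genuine content is the elementary planar lemma above; everything else is bookkeeping underpinned by the ellipticity and homogeneity of $p$, so I do not anticipate any serious obstacle.
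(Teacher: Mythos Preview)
Your proposal is correct and takes essentially the same approach as the paper: both use homogeneity to show that for each fixed direction $\omega$ in the cosphere the radial variable $r$ is confined to an interval of length $O(t)$, and then integrate. The paper works directly on $S^*X$ and simply asserts the planar inequality $|r^m|p(\omega)|-g(\arg p(\omega))|\le Ct$ without justification, whereas you pass through local charts and supply an explicit proof of that planar lemma; these are cosmetic differences, not a different route.
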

\begin{proof}
This follows from the fact that the radial derivative of $p$ is $\ne
0$. More precisely, write $T^*X\setminus 0\ni \rho =r\omega $, $\omega
\in S^*X$, $r>0$, so that $p(\rho )=r^mp(\omega )$, $p(\omega )\ne
0$. If $\rho \in p^{-1}(\gamma +D(0,t))$, we have for some $C\ge 1$,
independent of $t$,
$$
g(\mathrm{arg\,}p(\omega ))-Ct\le r^m|p(\omega )|\le
g(\mathrm{arg\,}p(\omega ))+Ct,
$$
$$
\left( \frac{g(\mathrm{arg\,}p(\omega ))-Ct}{|p(\omega )|}
\right)^{\frac{1}{m}} \le r\le     \left( \frac{g(\mathrm{arg\,}p(\omega ))+Ct}{|p(\omega )|}
\right)^{\frac{1}{m}},
$$
so for every $\omega \in S^*X$, $r$ has to belong to an interval of
length ${\cal O}(t)$.
\end{proof}

\par We next study the volume in (\ref{vo.3}) when $\gamma $ is a
radial segment of the form $[r_1,r_2]e^{i\theta _0}$, where
$0<r_1<r_2$ and $\theta _0\in S^1$.
\begin{prop}\label{vo3}
Let $\theta _0\in S^1$, $N_0\in\dot{{\bf N}}$ and assume that
$P(\theta _0,N_0)$ holds. Then if $0<r_1<r_2$ and $\gamma $ 
is the radial segment $[r_1,r_2]e^{i\theta _0}$, we have 
$$
\mathrm{vol\,}(p^{-1}(\gamma +D(0,t)))={\cal O}(t^{1/N_0}),\ t\to 0.
$$
\end{prop}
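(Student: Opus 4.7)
The plan is to combine the radial foliation argument from Proposition \ref{vo2} with a sublevel set estimate for $F=\mathrm{arg}\,p$ restricted to $S^*X$, supplied by the non-degeneracy hypothesis $P(\theta_0,N_0)$.

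Writing $\rho=r\omega$ with $\omega\in S^*X$ and $r>0$, we have $p(\rho)=r^{m}|p(\omega)|e^{iF(\omega)}$. For $\rho\in p^{-1}(\gamma+D(0,t))$ there must exist $s\in[r_1,r_2]$ with $|r^{m}|p(\omega)|e^{iF(\omega)}-se^{i\theta_0}|\le t$; rotating by $e^{-i\theta_0}$ and looking separately at the real and imaginary parts, this forces both $|r^{m}|p(\omega)|\cos(F(\omega)-\theta_0)-s|\le t$ and $r^{m}|p(\omega)||\sin(F(\omega)-\theta_0)|\le t$. Ellipticity implies $|p(\omega)|\ge c_0>0$ on the compact set $S^*X$, and $s\ge r_1>0$, so the first inequality confines $r$ to an interval of length $O(t)$ (exactly as in Proposition \ref{vo2}), while the second yields $|F(\omega)-\theta_0|\le Ct$ for a fixed $C$. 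Hence by Fubini, $\mathrm{vol}(p^{-1}(\gamma+D(0,t)))\le C t\cdot\mathrm{vol}\{\omega\in S^*X:|F(\omega)-\theta_0|\le Ct\}$, and it suffices to bound the angular sublevel set by $O(t^{1/N_0})$.

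For this I would argue locally near the compact set $F^{-1}(\theta_0)$: outside a fixed neighborhood of it the condition is empty once $t$ is small. At each $\omega_0\in F^{-1}(\theta_0)$ the hypothesis $P(\theta_0,N_0)$ supplies some $k=k(\omega_0)\le N_0$ and a direction $v$ with $\partial_v^k F(\omega_0)\ne 0$, and this derivative stays bounded away from zero on a neighborhood $U_{\omega_0}$. In coordinates $(y_1,y')$ with $\partial_{y_1}=v$ at $\omega_0$, the standard one-variable lemma (if $|g^{(k)}(y_1)|\ge c>0$ on an interval, then $|\{|g|\le t\}|\le C_k(t/c)^{1/k}$) applied for each fixed $y'$ and integrated via Fubini gives $\mathrm{vol}(U_{\omega_0}\cap\{|F-\theta_0|\le Ct\})=O(t^{1/k})\le O(t^{1/N_0})$ for $t\le 1$. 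A finite subcover of $F^{-1}(\theta_0)$ yields the global bound.

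Combining, the total volume is $O(t)\cdot O(t^{1/N_0})=O(t^{1+1/N_0})$, which is a fortiori $O(t^{1/N_0})$. The main obstacle is the passage from the pointwise non-vanishing in $P(\theta_0,N_0)$ to a uniform $O(t^{1/N_0})$ bound on the angular sublevel set; this is handled by the compactness of $F^{-1}(\theta_0)$, the finite-cover argument, and the elementary observation that $t^{1/k}\le t^{1/N_0}$ for $k\le N_0$ and $t\le 1$. Everything else reuses, essentially verbatim, the radial argument from the proof of Proposition \ref{vo2}.
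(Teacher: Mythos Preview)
Your overall strategy is correct and coincides with the paper's: reduce to the angular sublevel estimate $\mathrm{vol}_{S^*X}\{|F-\theta_0|\le Ct\}=O(t^{1/N_0})$ and establish the latter by a local argument plus compactness of $F^{-1}(\theta_0)$. However, your radial claim is wrong as stated. Since $s$ is not fixed but ranges over the whole segment $[r_1,r_2]$, the inequality $|r^m|p(\omega)|\cos(F(\omega)-\theta_0)-s|\le t$ only confines $r^m|p(\omega)|\cos(\cdots)$ to $[r_1-t,r_2+t]$, hence $r$ to an interval of length $O(1)$, not $O(t)$. Proposition~\ref{vo2} is not analogous here: there the curve $\gamma$ meets each ray through the origin in a single point, whereas a radial segment meets one ray in a full interval. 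This slip does not damage the conclusion, because $O(1)\cdot O(t^{1/N_0})=O(t^{1/N_0})$ is exactly what is asserted; you simply do not obtain (and cannot obtain) the extra factor of $t$ you wrote, so your announced bound $O(t^{1+1/N_0})$ is unjustified.

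For the angular sublevel set the paper invokes the Malgrange preparation theorem to write $F-\theta_0$ locally as a unit times a monic polynomial of degree $k\le N_0$ in one coordinate, from which the $O(t^{1/k})$ estimate is immediate. You instead appeal to the elementary one-variable sublevel lemma (if $|g^{(k)}|\ge c$ on an interval then $|\{|g|\le t\}|\le C_k(t/c)^{1/k}$), applied in a direction where $\partial_v^kF(\omega_0)\ne 0$. Both routes are valid and give the same local bound; yours is a bit more elementary, the paper's yields a cleaner normal form. In either case the finite-cover argument and the monotonicity $t^{1/k}\le t^{1/N_0}$ for $k\le N_0$, $t\le 1$, finish the proof.
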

\begin{proof}
We first observe that it suffices to show that
$$
\mathrm{vol}_{S^*X}F^{-1}([\theta _0-t,\theta _0+t])={\cal O}(t^{1/N_0}).
$$
This in turn follows for instance from the Malgrange preparation
theorem: At every point $\omega _0\in F^{-1}(\theta _0)$ we can choose
coordinates $\omega _1,...,\omega _{2n-1}$, centered at $\omega _0$,
such that for some $k\in\{ 1,...,N_0\}$, we have that $\partial _{\omega
  _1}^j(F-\theta _0)(\omega _0)$ is $=0$ when $0\le j\le k-1$ and $\ne
0$ when $j=k$. Then by Malgrange's preparation theorem, we have
$$
F(\omega )-\theta _0=G(\omega )(\omega _1^k+a_1(\omega _2,...,\omega
_{2n-1})\omega _1^{k-1}+...+a_k(\omega _2,...,\omega _{2n-1})),
$$
where $G,a_j$ are real and smooth, $G(\omega _0)\ne 0$,
and it follows that 
$$
\mathrm{vol\,}(F^{-1}([\theta _0-t,\theta
_0+t])\cap\mathrm{neigh\,}(\omega _0))={\cal O}(t^{1/k}).
$$
It then suffices to use a simple compactness argument.
\end{proof}

\par Now, let $0\le \theta _1<\theta _2\le 2\pi $, $g\in C^\infty
([\theta _1,\theta _2];]0,\infty [)$ and put 
\ekv{vo.4}
{
\Gamma ^g_{\theta _1,\theta _2;r_1,r_2}=\{ re^{i\theta };\, \theta
_1\le \theta \le \theta _2,\ r_1g(\theta )\le r\le r_2g(\theta )\} ,
}
for $0\le r_1\le r_2 <\infty $. If $0<r_1<r_2<+\infty $ and
$P(\theta _j,N_0)$ hold for $j=1,2$, then the last two propositions
imply that 
\ekv{vo.5}
{
\mathrm{vol\,}p^{-1}(\partial \Gamma ^g_{\theta _1,\theta
  _2;r_1,r_2}+D(0,t))={\cal O}(t^{1/N_0}),\ t\to 0.
}

\section{Semiclassical reduction}\label{sc}
\setcounter{equation}{0}

We are interested in the distribution of large eigenvalues $\zeta $ of
$P_\omega ^0$, so we make a standard reduction to a semi-classical
problem by letting $0<h\ll 1$ satisfy
\ekv{sc.1}
{
\zeta =\frac{z}{h^m},\ |z|\asymp 1,\ h\asymp |\zeta |^{-1/m},
}
and write
\ekv{sc.2}
{
h^m(P_\omega ^0-\zeta )=h^mP_\omega ^0-z=:P+h^mq^0_\omega -z,
}
where
\ekv{sc.3}
{
P=h^mP^0=\sum_{|\alpha |\le m} a_\alpha (x;h)(hD)^\alpha .
}
Here 
\eekv{sc.4}
{a_\alpha (x;h)&=&{\cal O}(h^{m-|\alpha |})\hbox{ in }C^\infty ,}
{a_\alpha (x;h)&=&a_\alpha ^0(x)\hbox{ when }|\alpha |=m.}
So $P$ is a standard semi-classical differential operator with
semi-classical principal symbol $p(x,\xi )$.

Our strategy will be to decompose the random perturbation
$$
h^mq_\omega ^0=\delta Q_\omega +k_\omega (x),
$$
where the two terms are independent, 
and with probability very close to 1, $\delta Q_\omega $ will be a
semi-classical random perturbation as in \cite{Sj2} while
\ekv{sc.5}
{
\Vert k_\omega \Vert_{H^s}\le h,
}
and 
\ekv{sc.5.5}{
s\in ]\frac{n}{2},\rho -\frac{n}{2}[} is fixed. Then $h^mP_\omega
^0$ will be viewed as a random perturbation of $h^mP^0+k_\omega $.
In order to achieve this without
extra assumptions on the order $m$, we will also have to represent
some of our eigenvalues $\alpha _j^0(\omega )$ as sums of two
independent Gaussian random variables.

\par We start by examining when
\ekv{sc.6}
{
\Vert h^mq_\omega ^0\Vert_{H^s}\le h.
}
\begin{prop}\label{sc1}
There is a constant $C>0$ such that (\ref{sc.6}) holds with
probability
$$
\ge 1-\exp (C-\frac{1}{2Ch^{2(m-1)}}).
$$
\end{prop}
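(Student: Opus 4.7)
The plan is to recognise $\|q_\omega^0\|_{H^s}^2$ as a weighted sum of independent squared complex Gaussians and then apply a Chernoff-type bound. Since $\widetilde R$ is a positive elliptic self-adjoint operator with eigenpairs $((\mu_j^0)^2,\epsilon_j)$, functional calculus provides an equivalent norm
$$
\|u\|_{H^s}^2\asymp\sum_j c_j|u_j|^2,\qquad u_j=(u|\epsilon_j),
$$
where $c_j=\|\epsilon_j\|_{H^s}^2$ is comparable to a fixed positive power of $\mu_j^0$. Combined with the upper bound $\sigma_j\lesssim(\mu_j^0)^{-\rho}$ from \no{in.8.5} and the Weyl law for $\widetilde R$, the hypothesis $s<\rho-n/2$ in \no{in.9} forces
$$
A:=\sup_j c_j\sigma_j^2<\infty\quad\text{and}\quad S:=\sum_j c_j\sigma_j^2<\infty.
$$
In particular $E[\|q_\omega^0\|_{H^s}^2]=S<\infty$, in agreement with the almost sure membership $q_\omega^0\in H^s$ already noted in the introduction.

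Next I would compute the moment generating function. Because each $\alpha_j^0$ is a centred complex Gaussian with $E[|\alpha_j^0|^2]=\sigma_j^2$, the variable $|\alpha_j^0|^2/\sigma_j^2$ is standard exponential, so $E[e^{t|\alpha_j^0|^2}]=(1-t\sigma_j^2)^{-1}$ for $t<\sigma_j^{-2}$. Independence of the $\alpha_j^0$ then gives
$$
E\bigl[e^{t\|q_\omega^0\|_{H^s}^2}\bigr]=\prod_j(1-tc_j\sigma_j^2)^{-1},\qquad 0<t<1/A,
$$
and Markov's inequality yields, for any $\lambda>0$ and admissible $t$,
$$
P\bigl(\|q_\omega^0\|_{H^s}^2>\lambda\bigr)\le e^{-t\lambda}\prod_j(1-tc_j\sigma_j^2)^{-1}.
$$

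The last step is to optimise. Choosing $t=1/(2A)$ forces each $tc_j\sigma_j^2\le 1/2$, so $-\log(1-tc_j\sigma_j^2)\le 2tc_j\sigma_j^2$, and summation in $j$ gives
$$
P\bigl(\|q_\omega^0\|_{H^s}^2>\lambda\bigr)\le\exp\bigl(S/A-\lambda/(2A)\bigr).
$$
Since $\|h^mq_\omega^0\|_{H^s}\le h$ is equivalent to $\|q_\omega^0\|_{H^s}^2\le h^{-2(m-1)}$, applying the estimate with $\lambda=h^{-2(m-1)}$ and any $C\ge\max(A,S/A)$ produces exactly the stated bound. The moment-generating-function calculation and the Chernoff optimisation are routine; the only step requiring mild care is the first one, namely checking from \no{in.8.5}, \no{in.9} and the spectral asymptotics of $\widetilde R$ that the weights $c_j\sigma_j^2$ are simultaneously uniformly bounded and summable.
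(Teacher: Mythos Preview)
Your proof is correct and follows essentially the same strategy as the paper: identify $\|q_\omega^0\|_{H^s}^2$ as a weighted sum $\sum c_j|\alpha_j^0|^2$ of independent exponential variables via the spectral resolution of $\widetilde R$, verify that $\sup_j c_j\sigma_j^2$ and $\sum_j c_j\sigma_j^2$ are finite from \no{in.8.5} and $s<\rho-n/2$, and then apply a sub-Gaussian tail estimate at level $h^{-2(m-1)}$. The only cosmetic difference is that the paper quotes the tail bound \no{sc.9} (your Chernoff estimate with $C_0=2$) as a result of \cite{Bo}, whereas you derive it directly from the moment generating function; the paper also makes the weights explicit as $c_j=(\mu_j^0)^{2s}$, which is what your phrase ``comparable to a fixed positive power of $\mu_j^0$'' amounts to.
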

\begin{proof}
We have
\ekv{sc.7}
{
h^mq_\omega ^0=\sum_0^\infty \alpha _j(\omega )\epsilon _j,\quad
\alpha _j=h^m\alpha _j^0\sim {\cal N}(0,(h^m\sigma _j)^2),
}
and the $\alpha _j$ are independent. Now, using standard functional
calculus for $\widetilde{R}$ as in \cite{Sj1, Sj2}, we see that
\ekv{sc.8}
{
\Vert h^mq_\omega ^0\Vert_{H^s}^2\asymp \sum_0^\infty \vert (\mu
_j^0)^s\alpha _j(\omega )\vert ^2,
}
where $(\mu _j^0)^s\alpha _j\sim {\cal N}(0,(\widetilde{\sigma }_j)^2)$ are
independent random variables and $\widetilde{\sigma }_j=(\mu
_j^0)^sh^m\sigma _j$.

\par Now recall the following fact, established by Bordeaux Montrieux
\cite{Bo}, improving and simplifying a similar result in \cite{HaSj}:
Let $d_0,d_1,...$ be a finite or infinite family of independent
complex Gaussian random variables, $d_j\sim {\cal N}(0,(\widehat{\sigma
}_j)^2)$, $0<\widehat{\sigma }_j<\infty $, and assume that $\sum
\widehat{\sigma }_j^2<\infty $. Then for every $t>0$,
\ekv{sc.9}
{
{\bf P}(\sum\vert d_j\vert^2\ge t)\le \exp (\frac{-1}{2\max
  \widehat{\sigma }_j^2}(t-C_0\sum \widehat{\sigma }_j^2)).
}
 Here ${\bf P}(A)$ denotes the
probability of the event $A$ and $C_0>0$ is a universal
constant. The estimate is interesting only when $t>C_0\sum
\widehat{\sigma }_j^2$ and for such values of $t$ it improves
if we replace $\{ d_0,d_1,...\}$ by a subfamily. Indeed,
$\sum \widehat{\sigma }_j^2$ will then decrease and so will $\max
\widehat{\sigma }_j^2$. 

Apply this to (\ref{sc.8}) with $d_j=(\mu _j^0)^s \alpha _j$,
$t=h^2$. 
Here, we recall that $\widetilde{\sigma }_j=(\mu _j^0)^s h^m\sigma
_j$, and get from (\ref{in.8.5}), (\ref{sc.5.5}) that  
\ekv{sc.10}
{
\max \widetilde{\sigma }_j^2\asymp  h^{2m},
}
while
\ekv{sc.11}
{
\sum_0^\infty \widetilde{\sigma }_j^2\lesssim h^{2m}\sum_0^\infty (\mu
_j^0)^{2(s-\rho )}.
}
Let $N(\mu )=\# (\sigma (\sqrt{\widetilde{R}})\cap ]0,\mu ])$ be the
number of eigenvalues of $\sqrt{\widetilde{R}}$ in $]0,\mu ]$, so that
$N(\mu )\asymp \mu ^n$ by the standard Weyl asymptotics for positive
elliptic operators on compact manifolds. The last sum in (\ref{sc.11})
is equal to 
$$
\int_0^\infty \mu ^{2(s-\rho )}dN(\mu )=\int_0^\infty 2(\rho -s)\mu
^{2(s-\rho )-1}N(\mu )d\mu ,
$$
which is finite since $2(s-\rho )+n<0$ by (\ref{sc.5.5}). Thus 
\ekv{sc.12}
{
\sum_0^\infty \widetilde{\sigma }_j^2\lesssim h^{2m},
}
and the proposition follows from applying (\ref{sc.8}), (\ref{sc.10}),
(\ref{sc.11}) to (\ref{sc.9}) with $t=h^2$.
\end{proof}

\par We next review the choice of parameters for the random
perturbation in \cite{Sj2} (and \cite{Sj1}). This perturbation is of
the form $\delta Q_\omega $,
\ekv{sc.13}
{
Q_\omega =h^{N_1}q_\omega ,\ \delta =\tau_0h^{N_1+n},\ 0<\tau_0\le \sqrt{h},
}
where
\ekv{sc.14}
{
q_\omega (x)=\sum_{0<h\mu _k^0\le L}\alpha _k(\omega )\epsilon _k(x),\
\vert \alpha \vert_{{\bf C}^D}\le R,
}
and a possible choice of $L,R$ is 
\ekv{sc.15}
{
L=Ch^{-M},\quad R=Ch^{-\widetilde{M}},
}
with
\ekv{sc.16}
{
M=\frac{3n-\kappa }{s-\frac{n}{2}-\epsilon },\quad
\widetilde{M}=\frac{3n}{2}-\kappa +(\frac{n}{2}+\epsilon )M.
}
Here $\epsilon >0$ is any fixed parameter in $]0,s-\frac{n}{2}[$ and
$\kappa \in ]0,1]$ is the geometric exponent appearing in
(\ref{vo.2}), in our case equal to $1/2$.

The exponent $N_1$ is given by 
\ekv{sc.17}
{
N_1=\widetilde{M}+sM+\frac{n}{2},
}
and $q_\omega $ should be subject to a probability density on $B_{{\bf
  C}^D}(0,R)$ of the form $C(h)e^{\Phi (\alpha ;h)}L(d\alpha
)$, where
\ekv{sc.18}
{
\vert \nabla _\alpha \Phi \vert ={\cal O}(h^{-N_4}),
}
for some constant $N_4\ge 0$.

\par Write 
\ekv{sc.19}{q_\omega ^0=q_\omega ^1+q_\omega ^2,}
\ekv{sc.20}
{
q_\omega ^1=\sum_{0<h\mu _j^0\le L}\alpha _j^0(\omega )\epsilon _j,\
q_\omega ^2=\sum_{h\mu _j^0 > L}\alpha _j^0(\omega )\epsilon _j.
}
From Proposition \ref{sc1} and its proof, especially the observation
after (\ref{sc.9}), we know that
\ekv{sc.21}
{
\| h^mq_\omega ^2\|_{H^s}\le h\hbox{ with probability }\ge 1-\exp (C_0-\frac{1}{2Ch^{2(m-1)}}).
}
We write
$$
P+h^mq_\omega ^0=(P+h^mq_\omega ^2)+h^mq_\omega ^1,
$$
and recall that the main result in \cite{Sj2} is valid also when $P$
is replaced by the perturbation $P+h^mq_\omega ^2$, provided that $\|
h^mq_\omega ^2\|_{H^s}\le h$.

The next question is then wether $h^mq_\omega ^1$ can be written as
$\tau_0h^{2N_1+n}q_\omega $ where $q_\omega =\sum_{0<h\mu _j^0\le
  L}\alpha _j\epsilon _j$ and $\vert \alpha \vert_{{\bf C}^D}\le R$
with probability close to 1. We get 
$$
\alpha _j=\frac{1}{\tau_0}h^{m-2N_1-n}\alpha _j^0(\omega )\sim {\cal
  N}(0,\widehat{\sigma }_j^2),$$
$$ \frac{1}{\tau_0}h^{m-2N_1-n}(\mu
_j^0)^{-\rho }e^{-(\mu _j^0)^{\frac{\beta }{M+1}}}\lesssim
\widehat{\sigma }_j\lesssim
\frac{1}{\tau_0}h^{m-2N_1-n}(\mu _j^0)^{-\rho }.
$$

\par Applying (\ref{sc.9}), we get
\ekv{sc.22}
{
{\bf P}(\vert \alpha \vert_{{\bf C}^D}^2\ge R^2)\le \exp (C-\frac{R^2\tau_0^2}{Ch^{2(m-2N_1-n)}}),
}
which is ${\cal O}(1)\exp (-h^{-\delta })$ provided that 
\ekv{sc.23}
{
-2\widetilde{M}+2\frac{\ln (1/\tau _0)}{\ln (1/h)}+2(2N_1+n-m)
\le -\delta .
}
Here $\tau _0\le \sqrt{h}$ and if we choose $\tau _0=\sqrt{h}$ or more
generally bounded from below by some power of $h$, we see that
(\ref{sc.23}) holds for any fixed $\delta $, provided that $m$ is
sufficiently large.

\par In order to avoid such an extra assumption, we shall now
represent $\alpha _j^0$ for $h\mu _j^0\le L$ as the sum of two
independent Gaussian random variables. Let $j_0=j_0(h)$ be the largest $j$
for which $h\mu _j^0\le L$. Put 
\ekv{sc.24}
{
\sigma '=\frac{1}{C}h^Ke^{-Ch^{-\beta }},\hbox{ where }K\ge \rho
(M+1),\ C\gg 1
}
so that $\sigma '\le \frac{1}{2}\sigma _j$ for $1\le j\le j_0(h)$. The
factor $h^K$ is needed only when $\beta =0$.

\par For $j\le j_0$, we may assume that $\alpha _j^0(\omega )=\alpha
_j'(\omega )+\alpha _j''(\omega ),$ where $\alpha _j'\sim {\cal
  N}(0,(\sigma ')^2)$, $\alpha _j''\sim {\cal N}(0,(\sigma _j'')^2)$ are independent
random variables and 
$$
\sigma _j^2=(\sigma ')^2+(\sigma _j'')^2,
$$
so that 
$$
\sigma _j''=\sqrt{\sigma _j^2-(\sigma ')^2}\asymp \sigma _j .
$$

\par Put $q_\omega ^1 =q_\omega '+q_\omega ''$, where
$$
q_\omega '=\sum_{h\mu _j^0\le L}\alpha _j'(\omega )\epsilon _j,\
q_\omega ''=\sum_{h\mu _j^0\le L}\alpha _j''(\omega )\epsilon _j.
$$
Now (cf (\ref{sc.19})) we write
$$
P+h^mq_\omega ^0=(P+h^m(q_\omega ''+q_\omega^2))+h^mq_\omega '. 
$$
The main result of \cite{Sj2} is valid for random perturbations of
$$
P_0:=P+h^m(q_\omega ''+q_\omega ^2),
$$
provided that $\Vert h^m(q_\omega ''+q_\omega ^2)\Vert_{H^s}\le h$,
which again holds with a probability as in (\ref{sc.21}). The new random
perturbation is now $h^mq_\omega '$ which we write as $\tau
_0h^{2N_1+n}\widetilde{q}_\omega $, where $\widetilde{q}_\omega $
takes the form
\ekv{sc.25}{
\widetilde{q}_\omega (x)=\sum_{0<h\mu _j^0\le L}\beta _j(\omega
)\epsilon _j,
}
with new independent random variables 
\ekv{sc.26}
{
\beta _j=\frac{1}{\tau _0}h^{m-2N_1-n}\alpha _j'(\omega )\sim {\cal
  N}(0,(\frac{1}{\tau _0}h^{m-2N_1-n}\sigma '(h))^2).
}

\par Now, by (\ref{sc.9}),
$$
{\bf P}(\vert \beta \vert_{{\bf C}^D}^2>R^2)\le \exp ({\cal
  O}(1)D-\frac{R^2\tau _0^2}{{\cal O}(1)(h^{m-2N_1-n}\sigma '(h))^2}).
$$
Here by Weyl's law for the distribution of eigenvalues of elliptic
self-adjoint differential operators, we have $D\asymp (L/h)^n$. Moreover,
$L,R$ behave like certain powers of $h$. 
\begin{itemize}
\item In the case when $\beta =0$, we choose $\tau _0=h^{1/2}$.
Then for any $a>0$ we get 
$$
{\bf P}(\vert \beta \vert_{{\bf C}^D}>R)\le C\exp (-\frac{1}{Ch ^a})
$$  for any given fixed $a$, provided we choose $K$ large enough in 
(\ref{sc.24}).
\item In the case $\beta >0$ we get the same conclusion with $\tau
  _0=h^{-K}\sigma '$ if $K$ is large enough.
\end{itemize}

\par In both cases, we see that the independent random
variables $\beta _j$ in (\ref{sc.25}), (\ref{sc.26}) have a joint
probability density $C(h)e^{\Phi (\alpha ;h)}L(d\alpha )$, satisfying (\ref{sc.18}) for
some $N_4$ depending on $K$.

\par With $\kappa =1/2$, we put 
$$
\epsilon _0(h)=h^{\kappa } ((\ln \frac{1}{h})^2+\ln \frac{1}{\tau _0}),
$$
where $\tau _0$ is chosen as above. Notice that $\epsilon _0(h)$ is of
the order of magnitude $h^{\kappa -\beta }$ up to a power of $\ln
\frac{1}{h}$. Then Theorem 1.1 in \cite{Sj2} gives:
\begin{prop}\label{sc2}
There exists a constant $N_4>0$ depending on $\rho ,n,m$ such that the
following holds: Let $\Gamma \Subset \dot{{\bf C}}$ have piecewise
smooth boundary. Then $\exists C>0$ such that for $0<r\le 1/C$,
$\widetilde{\epsilon }\ge C\epsilon _0(h)$, we have with probability
\ekv{sc.27}
{
\ge 1-\frac{C\epsilon _0(h)}{rh^{n+\max
    (n(M+1),N_4+\widetilde{M})}}e^{-\frac{\widetilde{\epsilon
    }}{C\epsilon _0(h)}}-Ce^{-\frac{1}{Ch}},
}
that 
\eekv{sc.28}
{
\vert \# (h^mP^0_\omega )\cap \Gamma )-\frac{1}{(2\pi
  h)^n}\mathrm{vol\,}(p^{-1}(\Gamma ))\vert 
\le }
{\frac{C}{h^n}(\frac{\widetilde{\epsilon }}{r}+C(r+\ln (\frac{1}{r})\mathrm{vol\,}
(p^{-1}(\partial \Gamma +D(0,r))))).}
\end{prop}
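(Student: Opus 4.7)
The plan is to verify that the setup carefully assembled in Section~\ref{sc} places us in exactly the framework of Theorem~1.1 of \cite{Sj2}, and then to invoke that theorem. The starting point is the decomposition
$$
h^mP^0_\omega = P_0 + h^mq'_\omega ,\qquad P_0 = P + h^m(q''_\omega + q^2_\omega),
$$
in which $P_0$ plays the role of the ``unperturbed'' semiclassical differential operator (modulo a small $H^s$ error) and $h^mq'_\omega = \tau_0 h^{2N_1+n}\widetilde{q}_\omega$ is the admissible random perturbation, cf.\ \nr{sc.25}, \nr{sc.26}.

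First I would dispose of the two bad events. By Proposition~\ref{sc1} together with the remark \nr{sc.21} (applied separately to $q''_\omega$ and to $q^2_\omega$, since the involved Gaussians form a subfamily of those controlled in the proof of Proposition~\ref{sc1}), we have $\Vert h^m(q''_\omega+q^2_\omega)\Vert_{H^s}\le h$ with probability $\ge 1-Ce^{-1/(Ch)}$ (in fact much better, given $m\ge 2$), which is absorbed into the last term of \nr{sc.27}. On this event Theorem~1.1 of \cite{Sj2} applies with $P$ replaced by $P_0$, since the hypotheses there tolerate precisely an $H^s$-bounded perturbation of size $h$. Second, the choice of $\tau_0$ ($=\sqrt{h}$ if $\beta=0$, $=h^{-K}\sigma'$ if $\beta>0$) together with \nr{sc.22} applied to the $\beta_j$'s yields $|\beta|_{{\bf C}^D}\le R$ with probability $\ge 1-Ce^{-1/(Ch^a)}$ for any preassigned $a>0$, provided $K$ is large enough; this is again absorbed into the $Ce^{-1/(Ch)}$ term in \nr{sc.27}.

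Third, I would check the density hypothesis \nr{sc.18} required by \cite{Sj2}: the $\beta_j$ are independent centered complex Gaussians with variances of the form $(\tau_0^{-1}h^{m-2N_1-n}\sigma'(h))^2$, which are bounded below by a fixed negative power of $h$ by our choice of $\tau_0$ and $\sigma'$ in \nr{sc.24}. Consequently their joint density, restricted to the ball $B_{{\bf C}^D}(0,R)$ and renormalized, is of the form $C(h)e^{\Phi(\alpha;h)}L(d\alpha)$ with $|\nabla_\alpha\Phi|={\cal O}(h^{-N_4})$ for some $N_4=N_4(\rho,n,m)$; this was already observed in the paragraph preceding the statement.

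Finally, I would simply quote Theorem~1.1 of \cite{Sj2} with the geometric exponent $\kappa=1/2$ supplied by Proposition~\ref{vo1}, and with $\epsilon_0(h) = h^{1/2}((\ln(1/h))^2+\ln(1/\tau_0))$. Its conclusion is precisely \nr{sc.28}, and the probability weight it produces is $\ge 1-(C\epsilon_0(h)/(rh^{n+\max(n(M+1),N_4+\widetilde M)}))\exp(-\widetilde\epsilon/(C\epsilon_0(h)))$, to which we subtract the two exceptional probabilities above; this yields \nr{sc.27}. The only real work is the bookkeeping of probabilities and the verification that the parameters $L,R,N_1,\widetilde M,M$ fixed in \nr{sc.15}--\nr{sc.17} coincide with those required by \cite{Sj2}; the main obstacle I anticipate is keeping the two independent sources of randomness (the high-frequency tail $q^2_\omega$ plus the hidden component $q''_\omega$ on the one hand, and the visible perturbation $\widetilde q_\omega$ on the other) cleanly separated so that the hypothesis ``deterministic operator $+$ independent random perturbation'' of \cite{Sj2} is literally satisfied.
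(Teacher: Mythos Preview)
Your proposal is correct and follows exactly the approach of the paper: the entire Section~\ref{sc} preceding the proposition is devoted to assembling the decomposition $h^mP_\omega^0=P_0+\tau_0h^{2N_1+n}\widetilde q_\omega$, verifying the $H^s$ bound on $P_0-P$ and the confinement $|\beta|\le R$ with the stated probabilities, and checking \nr{sc.18}, after which the paper simply writes ``Then Theorem 1.1 in \cite{Sj2} gives'' and states the proposition. Your writeup is a faithful and slightly more explicit recapitulation of that bookkeeping, including the independence issue which the paper handles by construction of the splitting $\alpha_j^0=\alpha_j'+\alpha_j''$.
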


As noted in \cite{Sj1} this gives Weyl asymptotics provided that 
\ekv{sc.29}
{
\ln (\frac{1}{r})\mathrm{vol\,}p^{-1}(\partial \Gamma +D(0,r))={\cal
  O}(r^\alpha ),
}
for some $\alpha \in ]0,1]$ (which would automatically be the case
if $\kappa $ had been larger than $1/2$ instead of being equal to
$1/2$), and we can then choose $r=\widetilde{\epsilon }^{1/(1+\alpha
  )}$, so that the right hand side of (\ref{sc.28}) becomes $\le
C\widetilde{\epsilon }^{\frac{\alpha }{1+\alpha }}h^{-n}$.

\par As in \cite{Sj1, Sj2} we also observe that if $\Gamma $ belongs
to a family ${\cal G}$ of domains satisfying the assumptions of the Proposition
uniformly, then with probability
\ekv{sc.30}
{
\ge 1-\frac{C\epsilon _0(h)}{r^2h^{n+\max
    (n(M+1),N_4+\widetilde{M})}}e^{-\frac{\widetilde{\epsilon
    }}{C\epsilon _0(h)}}-Ce^{-\frac{1}{Ch}},
}
the estimate (\ref{sc.28}) holds uniformly and simultaneously for all
$\Gamma \in {\cal G}$.

\section{End of the proof}\label{en}
\setcounter{equation}{0} 

Let $\theta _1,\theta _2,N_0$ be as in Theorem \ref{in1}, so that
$P(\theta _1,N_0)$ and $P(\theta _2,N_0)$ hold. Combining the
propositions \ref{vo1}, \ref{vo2}, \ref{vo3}, we
see that (\ref{sc.29}) holds for every $\alpha <1/N_0$ when
$\Gamma =\Gamma ^g_{\theta _1,\theta _2;1,\lambda }$, $\lambda >0$
fixed, and Proposition \ref{sc2} gives:
\begin{prop}\label{en1}
With the parameters as in Proposition \ref{sc2} and for every $\alpha
\in ]0,\frac{1}{N_0}[$, we have with probability
\ekv{en.1}
{
\ge 1 -\frac{C\epsilon _0(h)}{\widetilde{\epsilon }^{\frac{1}{1+\alpha
    }}h^{n+\max
    (n(M+1),N_4+\widetilde{M})}}e^{-\frac{\widetilde{\epsilon
    }}{C\epsilon _0(h)}}-Ce^{-\frac{1}{Ch}}
}
that
\ekv{en.2}
{
\vert \# (\sigma (h^mP_\omega )\cap \Gamma ^g_{\theta _1,\theta
  _2;1,\lambda })-\frac{1}{(2\pi h)^n}\mathrm{vol\,}(p^{-1}(\Gamma
^g_{\theta _1,\theta _2;1,\lambda }))\vert \le
C\frac{\widetilde{\epsilon }^{\frac{\alpha }{1+\alpha }}}{h^n}.
}
Moreover, the conclusion (\ref{en.2}) is valid simultaneously for all
$\lambda \in [1,2]$ and all $(\theta _1,\theta _2)$ in a set where
$P(\theta _1,N_0)$, $P(\theta _2,N_0)$ hold uniformly, with probability
\ekv{en.3}
{
\ge 1 -\frac{C\epsilon _0(h)}{\widetilde{\epsilon }^{\frac{2}{1+\alpha
    }}h^{n+\max
    (n(M+1),N_4+\widetilde{M})}}e^{-\frac{\widetilde{\epsilon
    }}{C\epsilon _0(h)}}-Ce^{-\frac{1}{Ch}}.
}
\end{prop}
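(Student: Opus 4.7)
The plan is to deduce Proposition \ref{en1} from Proposition \ref{sc2} by verifying that the Weyl hypothesis (\ref{sc.29}) holds for the domains $\Gamma=\Gamma^g_{\theta_1,\theta_2;1,\lambda}$, and then to optimize the free parameter $r$ in (\ref{sc.28}).

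First I would decompose the boundary $\partial\Gamma^g_{\theta_1,\theta_2;1,\lambda}$ into four pieces: the two arcs $\{re^{i\theta}:\theta\in[\theta_1,\theta_2],\ r=g(\theta)\}$ and $\{re^{i\theta}:\theta\in[\theta_1,\theta_2],\ r=\lambda g(\theta)\}$, together with the two radial segments at angles $\theta_1$ and $\theta_2$ connecting them. Propositions \ref{vo2} and \ref{vo3} are then applied piece by piece: the arcs contribute $\mathcal{O}(t)$ to $\mathrm{vol}(p^{-1}(\gamma+D(0,t)))$ (by Proposition \ref{vo2}, extending $g$ smoothly and periodically is harmless since we only need the estimate near the pieces of interest), while each radial segment contributes $\mathcal{O}(t^{1/N_0})$ (by Proposition \ref{vo3}, using $P(\theta_j,N_0)$). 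Summing,
$$
\mathrm{vol\,}p^{-1}(\partial\Gamma^g_{\theta_1,\theta_2;1,\lambda}+D(0,r))
=\mathcal{O}(r^{1/N_0}),\quad r\to 0,
$$
so for any $\alpha\in\,]0,1/N_0[$ we have $\ln(1/r)\cdot\mathrm{vol\,}p^{-1}(\partial\Gamma+D(0,r))=\mathcal{O}(r^\alpha)$, which is exactly hypothesis (\ref{sc.29}).

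Next I would apply Proposition \ref{sc2} with the choice $r=\widetilde{\epsilon}^{1/(1+\alpha)}$ recommended in the discussion following that proposition. With this choice, the right hand side of (\ref{sc.28}) becomes $\le C\widetilde{\epsilon}^{\alpha/(1+\alpha)}h^{-n}$, which is (\ref{en.2}). Substituting the same $r$ into (\ref{sc.27}) produces the probability bound (\ref{en.1}) directly.

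For the uniform statement, the main thing to check is that the volume estimates above are uniform when $\lambda\in[1,2]$ and $(\theta_1,\theta_2)$ varies in a compact set where $P(\theta_j,N_0)$ holds uniformly: Proposition \ref{vo2} is uniform when $g,1/g$ are controlled in $C^1$; Proposition \ref{vo3} is uniform in $\theta_0$ by the compactness argument after the Malgrange preparation theorem, since the integer $k\le N_0$ and the smooth data depend continuously on $\theta_0$. Thus the family of domains $\Gamma^g_{\theta_1,\theta_2;1,\lambda}$ satisfies the hypotheses of Proposition \ref{sc2} uniformly, so the variant (\ref{sc.30}) applies and yields (\ref{en.3}) after the same substitution $r=\widetilde{\epsilon}^{1/(1+\alpha)}$ (the extra $r$ in the denominator of (\ref{sc.30}) relative to (\ref{sc.27}) produces the exponent $2/(1+\alpha)$ in (\ref{en.3})). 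The only genuine work is the uniform control of the preparation step in Proposition \ref{vo3}; everything else is a direct substitution into the results already established.
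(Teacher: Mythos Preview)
Your proposal is correct and follows essentially the same route as the paper: verify the boundary-volume estimate (\ref{vo.5}) via Propositions \ref{vo2} and \ref{vo3}, deduce (\ref{sc.29}) for every $\alpha<1/N_0$, substitute $r=\widetilde{\epsilon}^{1/(1+\alpha)}$ into Proposition \ref{sc2}, and invoke (\ref{sc.30}) for the uniform version. The paper's own argument is the single sentence preceding the proposition, together with the discussion after (\ref{sc.29}); you have simply unpacked it (and in particular re-derived (\ref{vo.5}), which the paper already recorded).
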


\par For $0<\delta \ll 1$, choose $\widetilde{\epsilon
}=h^{-\delta }\epsilon _0\le Ch^{\frac{1}{2}-\beta -\delta }(\ln \frac{1}{h})^2$, so that
$\widetilde{\epsilon }/\epsilon _0=h^{-\delta }$. Then for some $N_5$
we have for every $\alpha \in ]0,1/N_0[$ that 
\ekv{en.4}
{
\vert \# (\sigma (h^mP_\omega )\cap \Gamma ^g_{\theta _1,\theta
  _2;1,\lambda })-\frac{1}{(2\pi h)^n}\mathrm{vol\,}(p^{-1}(\Gamma
^g_{\theta _1,\theta _2;1,\lambda }))\vert \le \frac{C_\alpha
}{h^n}(h^{\frac{1}{2}-\delta -\beta }(\ln \frac{1}{h})^2)^{\frac{\alpha
  }{1+\alpha }},
}
simultaneously for $1\le \lambda \le 2$ and all $(\theta _1,\theta _2)$ in a set where
$P(\theta _1,N_0)$, $P(\theta _2,N_0)$ hold uniformly, with probability
\ekv{en.5}
{
\ge 1-\frac{C}{h^{N_5}}e^{-\frac{1}{Ch^\delta }}.
}
Here $\alpha /(1+\alpha )\nearrow 1/(N_0+1)$ when $\alpha \nearrow
1/N_0$, so the upper bound in (\ref{en.4}) can be replaced by 
$$
\frac{C_\delta }{h^n}h^{(\frac{1}{2}-\beta -2\delta )/(N_0+1)}.
$$

Assuming 
$P(\theta _1,N_0)$, $P(\theta _2,N_0)$, we want to count
the number of eigenvalues of $P_\omega $ in 
$$
\Gamma _{1,\lambda }=\Gamma ^g_{\theta _1,\theta _2;1,\lambda }
$$
when $\lambda \to \infty $. Let $k(\lambda )$ be he largest integer
$k$ for which $2^k\le \lambda $ and decompose
$$
\Gamma _{1,\lambda }=(\bigcup_0^{k(\lambda )-1}\Gamma
_{2^k,2^{k+1}})\cup \Gamma _{2^{k(\lambda )},\lambda }.
$$
In order to count the eigenvalues of $P_\omega ^0$ in $\Gamma
_{2^k,2^{k+1}}$ we define $h$ by $h^m2^k=1$, $h=2^{-k/m}$, so that 
\begin{eqnarray*}
\# (\sigma (P_\omega ^0)\cap \Gamma _{2^k,2^{k+1}})&=&\# (\sigma
(h^mP^0_\omega )\cap \Gamma _{1,2}),\\ 
\frac{1}{(2\pi
  )^n}\mathrm{vol\,}(p^{-1}(\Gamma _{2^k,2^{k+1}}))&=&\frac{1}{(2\pi
 h)^n}\mathrm{vol\,}(p^{-1}(\Gamma _{1,2})).
\end{eqnarray*}
Thus, with probability $\ge 1-C2^{\frac{N_5k}{m}}e^{-2^\frac{\delta k}{m}/C}$ we have
\ekv{en.6}
{
\vert \# (\sigma (P_\omega ^0)\cap \Gamma _{2^k,2^{k+1}})
-\frac{1}{(2\pi )^n}\mathrm{vol\,}p^{-1}(\Gamma _{2^k,2^{k+1}})
\vert \le C_\delta 2^{\frac{kn}{m}}2^{-\frac{k}{m}(\frac{1}{2}-\beta -2\delta
)\frac{1}{N_0+1}}. 
}
Similarly, with probability $\ge 1-C2^{N_5k(\lambda )/m}
e^{-2^{\delta k(\lambda )/m}/C}$, we have
\ekv{en.7}
{
\vert \# (\sigma (P_\omega ^0)\cap \Gamma _{2^{k(\lambda
  )},\widetilde{\lambda }})
-\frac{1}{(2\pi )^n}\mathrm{vol\,}p^{-1}(\Gamma _{2^k(\lambda
  ),\widetilde{\lambda }})
\vert \le C_\delta \lambda ^{\frac{n}{m}}\lambda
^{-\frac{1}{m}(\frac{1}{2}-\beta -2\delta
)\frac{1}{N_0+1}}, 
}
simultaneously for all $\widetilde{\lambda }\in [\lambda ,2\lambda [$.

\par Now, we proceed as in \cite{Bo}, using essentially the Borel--Cantelli lemma. Use that
\begin{eqnarray*}
\sum_\ell^\infty 2^{N_5\frac{k}{m}}e^{-2^{\delta \frac{k}{m}}/C}&=&{\cal
  O}(1) 2^{N_5\frac{\ell}{m}}e^{-2^{\delta \frac{\ell}{m}}/C},\\
\sum_{2^k\le \lambda
}2^{k\frac{n}{m}}2^{-\frac{k}{m}(\frac{1}{2}-\beta -2\delta
  )\frac{1}{N_0+1}}&=&{\cal O}(1)\lambda
^{\frac{n}{m}-\frac{1}{m}(\frac{1}{2}-\beta -2\delta )\frac{1}{N_0+1}},
\end{eqnarray*}
to conclude that with probability $\ge
1-C2^{N_5\frac{\ell}{m}}e^{-2^{\delta \frac{\ell}{m}}/C}$, we have
$$
\vert \# (\sigma (P_\omega ^0)\cap \Gamma _{2^\ell ,\lambda })\vert
\le C_\delta \lambda ^{\frac{n}{m}-\frac{1}{m}(\frac{1}{2}-\beta -\delta
  )\frac{1}{N_0+1}}
+C(\omega )
$$
for all $\lambda \ge 2^\ell$. This statement implies Theorem
\ref{in1}. \hfill{$\Box$}

\begin{proofof} Theorem \ref{in2}. This is just a minor modification
  of the proof of Theorem \ref{in1}. Indeed, we already used the
  second part of Proposition \ref{sc2}, to get (\ref{en.7}) with the
  probability indicated there. In that estimate we are free to
  vary $(g,\theta _1,\theta _2)$ in ${\cal G}$ and the same holds for
  the estimate (\ref{en.6}). With these modifications, the same proof 
gives Theorem \ref{in2}.
\end{proofof}

\end{document}